\numberwithin{equation}{section}
\numberwithin{figure}{section}
\numberwithin{table}{section}
\theoremstyle{plain}
\newtheorem{thm}{\protect\theoremname}[section]
\theoremstyle{definition}
\newtheorem{example}[thm]{\protect\examplename}
\theoremstyle{remark}
\newtheorem{rem}[thm]{\protect\remarkname}
\theoremstyle{definition}
\newtheorem{defn}[thm]{\protect\definitionname}
\theoremstyle{plain}
\newtheorem{lem}[thm]{\protect\lemmaname}
\subjclass[2020]{14N10, 14Q25, 14N35, 14C05, 14-04, 53D12}
\providecommand{\definitionname}{Definition}
\providecommand{\examplename}{Example}
\providecommand{\remarkname}{Remark}
\providecommand{\theoremname}{Theorem}
\providecommand{\lemmaname}{Lemma}
\begin{document}
\global\long\def\d{\mathrm{d}}%
\global\long\def\ev{\mathrm{ev}}%
\global\long\def\ctopT{c_{\mathrm{top}}^{T}}%
\global\long\def\ctop{c_{\mathrm{top}}}%
\global\long\def\ch{\mathrm{ch}}%
\global\long\def\td{\mathrm{td}}%

\title{Effective computations of the Atiyah-Bott formula}
\author{Giosu{\`e} Muratore and Csaba Schneider}
\date{\today}
\begin{abstract}
We present an implementation of the Atiyah-Bott residue formula for $\overline{M}_{0,m}(\mathbb{P}^{n},d)$.
We use this implementation to compute a large number of Gromov-Witten invariants of
genus $0$, including intersection numbers of rational curves on
general complete intersections. We also compute some
numbers of rational contact curves satisfying
suitable Schubert conditions. Our computations confirm known
predictions made by Mirror Symmetry. The code we developed for these problems is publicly available and can also be 
used for other types of computations.
\end{abstract}

\address{Department of Mathematics, Universidade Federal de Minas Gerais, Belo
Horizonte, MG, Brazil.}
\email{\href{mailto:muratore.g.e@gmail.com}{muratore.g.e@gmail.com}, \href{mailto:csaba.schneider@gmail.com}{csaba.schneider@gmail.com}}
\urladdr{\url{https://sites.google.com/view/giosue-muratore}}
\keywords{Legendrian, contact, torus action, Bott formula, enumeration, SageMath, Julia programming language}
\maketitle

\section{Introduction}

The solution of an enumerative problem in algebraic geometry is often given as the degree of a polynomial of Chern classes of bundles over some moduli space $M$.
The computation of such classes is easier when the moduli space and the
bundle carry an action of a group $G$. Moreover, the $G$-action 
on $M$ lifts equivariantly to the total space of many natural
bundles (such as the tangent bundle).

Suppose that the fixed locus of the action on $M$ is a finite union
of points $\{V\}_{V\in I}$, and $i:V\hookrightarrow M$ is the inclusion.
The Atiyah-Bott Theorem (see \cite{bott1967residue,AB}) says that the localized top Chern class of the normal
bundle $\ctop(N_{V/M})$ is invertible along the fixed locus of the
group action. Thus, for any equivariant class $P$ of $M$, we have
the celebrated formula:
$$
\int_{M}P=\sum_{V\in I}\int_{V}^{G}\frac{i^{*}(P)}{\ctop(N_{V/M})}.
$$
The symbol $\int_{V}^{G}$ denotes the $G$-equivariant integral at $V$. 
The terms in the sum are often easier to compute than the degree on the 
left-hand side. The first application of this formula for enumerative problems
is due to 
\cite{ES}. The results in~\cite{ES} include complete lists
of numbers of twisted cubic curves and elliptic quartic
curves contained in a general complete intersection satisfying
suitable Schubert conditions. Kontsevich~
\cite{kontsevich1995enumeration} used an analogous formula for the moduli of stable maps
$\overline{M}_{0,m}(\mathbb{P}^{n},d)$, for arbitrary $n$, $m$
and $d$. He successfully computed the number of rational quartics
contained in a general quintic threefold, confirming the prediction
of Mirror Symmetry. His version of the formula is
$$
\int_{\overline{M}_{0,m}(\mathbb{P}^{n},d)}P=\sum_{\Gamma}\frac{1}{a_{\Gamma}}
\frac{P^{T}(\Gamma)}{\ctopT(N_{\Gamma})(\Gamma)},
$$
where the right-hand side is a finite sum of rational expressions over a set of combinatorial data (see Section $3$). This approach was later generalized to positive
genus by \cite{graber1999localization}. Such results are well known
to experts in enumerative geometry.

One of the ingredients of Kontsevich's method is parameterizing
the fixed locus of the torus action on $\overline{M}_{0,m}(\mathbb{P}^{n},d)$
by graphs colored by the fixed points of the action on $\mathbb{P}^{n}$.
All such graphs can be obtained in a few seconds by modern
computers, even though their number grows very fast. Any intersection
of Chern classes of equivariant vector bundles will be a complicated
sum of rational expressions taken over the set of all such graphs. At the very end,
however, this sum magically collapses to a rational number, as expected.

The authors implemented computer code to apply the Atiyah-Bott formula to $\overline{M}_{0,m}(\mathbb{P}^{n},d)$.
The package can be used to to perform a wide variety of computations as long as the equivariant class is given with an explicit formula. 
Such equivariant classes can be easily implemented inside the package,  and so our code can perform analogous
computations not described here. Finally, we
found classical (and less classical) numbers of curves with geometric
conditions. In particular, we found a complete list of numbers of contact curves (that is, the integral curves of a variety with contact structure) meeting arbitrary linear subspaces. The enumeration of such curves, started by \cite{levVa}, was only partially known before. Our main result is thus the following theorem.
\begin{thm}
The numbers of rational contact curves in $\mathbb{P}^{3}$ up to degree $8$ meeting arbitrary linear subspaces are given in Table \ref{tab:contact}.
\end{thm}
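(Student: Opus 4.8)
The plan is to realize the enumeration of contact curves as an equivariant integral over $\overline{M}_{0,m}(\mathbb{P}^{3},d)$ and then evaluate it with the localization package. Recall that the contact structure on $\mathbb{P}^{3}$ is encoded by the exact sequence $0\to D\to T\mathbb{P}^{3}\xrightarrow{\theta}\mathcal{O}(2)\to0$, where $D$ is the rank-two contact distribution and $\theta\in H^{0}(\mathbb{P}^{3},\Omega^{1}_{\mathbb{P}^{3}}(2))$ is the contact form associated to a symplectic form on $\mathbb{C}^{4}$. A stable map $f\colon C\to\mathbb{P}^{3}$ is a contact (Legendrian) curve exactly when the composite $T_{C}\to f^{*}T\mathbb{P}^{3}\xrightarrow{f^{*}\theta}f^{*}\mathcal{O}(2)$ vanishes, i.e.\ when $\theta\circ\d f=0$. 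First I would form, on the moduli space, the bundle
$$E_{d}=\pi_{*}\bigl(\ev^{*}\mathcal{O}(2)\otimes\omega_{\pi}\bigr),$$
where $\pi\colon\overline{M}_{0,m+1}(\mathbb{P}^{3},d)\to\overline{M}_{0,m}(\mathbb{P}^{3},d)$ is the universal curve, $\ev$ the evaluation at the extra point, and $\omega_{\pi}$ the relative dualizing sheaf. Over the locus of maps with smooth domain the fiber of $E_{d}$ is $H^{0}(\mathbb{P}^{1},\mathcal{O}(2d-2))$, so $\operatorname{rank}E_{d}=2d-1$, and $\theta\circ\d f$ defines a global section of $E_{d}$ whose zero scheme is the contact locus.

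Next I would reduce the count to a top-intersection number. Since $\dim\overline{M}_{0,m}(\mathbb{P}^{3},d)=4d+m$ and $\operatorname{rank}E_{d}=2d-1$, the contact locus has dimension $2d+m+1$, so imposing that the $j$-th marked point lie on a linear subspace of codimension $c_{j}$ with $\sum_{j=1}^{m}c_{j}=2d+m+1$ cuts it down to finitely many points. The desired number is then
$$N_{d}=\int_{\overline{M}_{0,m}(\mathbb{P}^{3},d)}\ctop(E_{d})\cup\prod_{j=1}^{m}\ev_{j}^{*}H^{c_{j}},$$
with $H$ the hyperplane class. Although the section $\theta\circ\d f$ is not invariant under the full torus $T=(\mathbb{C}^{*})^{4}$ acting on $\mathbb{P}^{3}$, the bundle $E_{d}$ and all evaluation classes are $T$-equivariant, and that is all the localization formula requires: $N_{d}$ is a non-equivariant number that we are free to compute equivariantly.

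Then I would apply Kontsevich's form of the Atiyah-Bott formula from Section $3$. The $T$-fixed loci are indexed by decorated trees $\Gamma$ whose vertices are colored by the four fixed points of $\mathbb{P}^{3}$, whose edges carry degrees, and whose legs record the marked points, giving
$$N_{d}=\sum_{\Gamma}\frac{1}{a_{\Gamma}}\frac{\ctopT(E_{d})(\Gamma)\cdot\prod_{j}\ev_{j}^{*}H^{c_{j}}(\Gamma)}{\ctopT(N_{\Gamma})(\Gamma)}.$$
The one genuinely new ingredient is the restriction $\ctopT(E_{d})(\Gamma)$. I would compute it from the normalization sequence of the fixed curve: the cohomology of $\ev^{*}\mathcal{O}(2)\otimes\omega_{\pi}$ splits, through the associated long exact sequence, into a contribution from each edge of degree $\delta$ (the $T$-weights of $H^{0}(\mathbb{P}^{1},\mathcal{O}(2\delta-2))$ under the induced $\mathbb{C}^{*}$-action on that component) together with vertex and node correction terms. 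Multiplying the resulting equivariant weights yields $\ctopT(E_{d})(\Gamma)$ in the same combinatorial format as the other factors already handled by the package.

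Finally, all of this---graph enumeration, weight bookkeeping, and the collapse of the rational-function sum---is carried out by the implementation, run for every admissible choice of codimensions with $2\le d\le8$. The hard part is the correct equivariant decomposition of $E_{d}$ over each graph: one must assemble the $H^{0}$ and $H^{1}$ pieces of the normalized components and the nodal gluing terms consistently with $\omega_{\pi}$ across the boundary strata, since an error there would spoil the cancellation. The rapid growth in the number of graphs with $d$ and $m$ is a secondary, purely computational obstacle. As a check, each $N_{d}$ must emerge as a nonnegative integer after the sum over $\Gamma$ collapses, and the values already known through \cite{levVa} must be reproduced; together these confirm the entries of Table \ref{tab:contact}.
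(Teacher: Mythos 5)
Your proposal is correct and follows essentially the same route as the paper: you express the count as the Euler-class integral (\ref{eq:contact}) of $\mathcal{E}_{d,m}=\delta_{*}(\omega_{\delta}\otimes\ev_{m+1}^{*}\mathcal{O}_{\mathbb{P}^{3}}(2))$ against incidence classes (the enumerativity of this integral and the fixed-point restriction $\ctopT(\mathcal{E}_{d,m})(\Gamma)$ being precisely the ingredients the paper imports from \cite{muratore2020enumeration}), and then evaluate it by the Atiyah--Bott localization (\ref{eq:Bott}) via the implementation, cross-checking against the known low-degree values of \cite{levVa}. The only differences are presentational rather than mathematical: the paper pushes the incidence conditions down to $m=0$ via Remark \ref{rem:semplif} to drastically reduce the number of decorated graphs, and quotes the closed product formula for $\ctopT(\mathcal{E}_{d,m})(\Gamma)$ instead of re-deriving it from the normalization sequence as you sketch.
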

This list includes contact curves meeting
an appropriate number of lines in $\mathbb{P}^{3}$, as described
in \cite[A328553]{oeis}.

In Section 2 we review the notation and recall some basic results. In Sections~3 and 4 we briefly describe the Atiyah-Bott formula and our algorithm. In the last section, we compute several Gromov-Witten invariants, including the virtual numbers of rational curves of the quintic threefold, up to degree $9$. Some of these numbers were never computed before using the Atiyah-Bott formula. Hiep \cite{Hiep} computed them up to degree $6$ (but it is not clear, for us, how his code can be used for computations that are different from those explicitly given in the article). In the same section there are many other applications of our code, including the number of curves with tangency conditions.
Our code is written in the 
Julia programming language \cite{bezanson2017julia} and is freely available.

\smallskip

\noindent {\bf Acknowledgments.} The first author thanks Rahul Pandharipande, Andrea Ricolfi, Xiaowen Hu and Jieao Song for their help, and Alfredo Donno and Matteo Cavaleri for a fruitful discussion.
He was also supported by a postdoctoral fellowship PNPD-CAPES.
The second author acknowledges the 
support of the CNPq projects {\em Produtividade em Pesquisa} (project no.: 308212/2019-3)  
and {\em Universal} (project no.: 421624/2018-3).
We both are grateful to Israel Vainsencher for his continuous support, and to the referee for their careful reading.

\section{Stable maps}

\subsection{Moduli space and invariants}
A marked stable map to $\mathbb P^n$ of degree $d$ is a tuple $(C,f,p_1,\ldots,p_m)$ in which the components are as follows.
\begin{itemize}
    \item $C$ is a projective, connected, reduced, at-worst-nodal complex curve of arithmetic genus zero.
    \item $f\colon C\rightarrow \mathbb P^n$ is a morphism such that $f_*[C]$ is a cycle of degree $d$.
    \item $p_1,\ldots,p_m$ are regular distinct points of $C$, called marked points.
    \item If $E\subset C$ is a component contracted by $f$, then $E$ contains at least three points among the marks and nodes of $C$.
\end{itemize}
The coarse moduli space of such stable maps is denoted by $\overline{M}_{0,m}(\mathbb{P}^{n},d)$. It is irreducible of dimension $n+(n+1)d+m-3$, see \cite{FP}.

In this section we will use the following notation.
\begin{itemize}
\item $\ev_{i}\colon \overline{M}_{0,m}(\mathbb{P}^{n},d)\rightarrow\mathbb{P}^{n}$
is the evaluation map at the $i^{\mathrm{th}}$ marked point.
\item $\delta\colon \overline{M}_{0,m+1}(\mathbb{P}^{n},d)\rightarrow\overline{M}_{0,m}(\mathbb{P}^{n},d)$
is the forgetful map of the last point.
\end{itemize}

Let $h$ be the cohomology class of a hyperplane of $\mathbb{P}^{n}$, and let $a_{1},\ldots,a_{m}$
be positive integers, such that $\sum_{i=1}^m a_{i}=\dim\overline{M}_{0,m}(\mathbb{P}^{n},d)$. The following Gromov-Witten invariant counts the number of curves in $\mathbb P^n$ of degree $d$ meeting $m$ linear subspaces of codimension $a_i$ in general position for $i=1,\ldots,m$ \cite[Lemma 14]{FP}:
\begin{equation}\label{eq:GW_std}
\int_{\overline{M}_{0,m}(\mathbb{P}^{n},d)}
\ev_1^*(h^{a_1})\cdot\cdot\cdot\ev_m^*(h^{a_m}).
\end{equation}
More generally, we may consider the descendant invariants given as
\begin{equation}\label{eq:PsiGW_std}
\int_{\overline{M}_{0,m}(\mathbb{P}^{n},d)}
\ev_1^*(h^{a_1})\cdots\ev_m^*(h^{a_m})\cdot\psi^{a'_1}_1\cdots\psi^{a'_m}_m,
\end{equation}
where $\psi_i$ is the first Chern class of the line bundle $\mathbb L_i$ whose fiber at a moduli point is the cotangent space of the curve at the $i^\mathrm{th}$ marked point.
\begin{rem}\label{rem:semplif}
Using the projection formula, it is not
difficult to see that (\ref{eq:GW_std}) is equal to
\begin{equation}\label{eq:GW_incomplete}
\int_{\overline{M}_{0,0}(\mathbb{P}^{n},d)}\delta_{*}(\ev_1^*(h^{a_1}))\cdots\delta_{*}(\ev_1^*(h^{a_m})).
\end{equation}
In the next subsection we will see that by reducing the number of marks we will speed up the computations for the Atiyah-Bott formula. Indeed, by  \cite[Theorem 4.3]{muratore2020enumeration}, $\delta_{*}(h^{a_{1}})$ is a polynomial in the Chern classes
of equivariant vector bundles;
see \cite[Proposition 3.4]{ES} for a similar discussion.
\end{rem}
\begin{rem}\label{rem:jet}
Enumerative problems of curves with tangency conditions often involve the bundles $\mathrm{J}^p(\ev_i^*\mathcal{O}_{\mathbb{P}^n}(z))$ of $p$-jets (see \cite[Section 7.2]{3264} for the definition). The Euler class of this bundle can be obtained by recursion using the condition $\mathrm{J}^0(\ev_i^*\mathcal{O}_{\mathbb{P}^n}(z)):=\ev_i^*\mathcal{O}_{\mathbb{P}^n}(z)$ and by the exact sequence

\begin{equation*}
0\rightarrow\ev_i^*\mathcal{O}_{\mathbb{P}^n}(z)\otimes\mathbb L^{\otimes p}_i \rightarrow
\mathrm{J}^{p}(\ev_i^*\mathcal{O}_{\mathbb{P}^n}(z))\rightarrow
\mathrm{J}^{p-1}(\ev_i^*\mathcal{O}_{\mathbb{P}^n}(z))\rightarrow 0.
\end{equation*}
\end{rem}

Let $X\overset{i}{\hookrightarrow}\mathbb{P}^{n}$ be a general complete
intersection of multidegree $b=(b_{1},\ldots,b_{s})$, such that $n+1\ge\sum b_{i}$.
That is, $X$ is the zero locus of a general section $\sigma$ of
the vector bundle $F=\bigoplus_{i=1}^{s}\mathcal{O}_{\mathbb{P}^{n}}(b_{i})$.
By standard base change theory, $\delta_{*}(\ev_{m+1}^{*}F)$ is a
vector bundle on $\overline{M}_{0,m}(\mathbb{P}^{n},d)$ of rank $\sum(b_{i}d+1)=s+\sum b_{i}d$.
The section defining $X$ induces in a natural way a section $\sigma'$
of $\delta_{*}(\ev_{m+1}^{*}F)$ whose zero locus is the locus in
$\overline{M}_{0,m}(\mathbb{P}^{n},d)$ of maps with image contained in $X$. In other words, the section $\sigma'$
vanishes at a moduli point of a stable map $f:C\rightarrow\mathbb{P}^{n}$
if and only if $f=i\circ f'$ for some map $f':C\rightarrow X$, and
$i_{*}f_{*}[C]$ has degree $d$.

We may consider the Gromov-Witten invariant
\begin{equation}\label{eq:GW_complete}
\int_{\overline{M}_{0,m}(\mathbb{P}^{n},d)}\ev_{1}^{*}(h^{a_{1}})\cdots\ev_{m}^{*}(h^{a_{m}})\cdot\ctop(\delta_{*}(\ev_{m+1}^{*}F)).
\end{equation}
This invariant counts the virtual number of rational curves\footnote{That is, the number of stable maps with multiplicity.} in $X$
meeting general hyperplanes of codimension $a_{i}$ for $i=1,\ldots,m$.%

\begin{rem}
Equation (\ref{eq:GW_complete}) does not coincide with the number of rational curves in $X$
meeting general hyperplanes of codimension $a_{i}$ for $i=1,\ldots,m$ if we have contribution,
for example, from stable maps with automorphisms. That is, (\ref{eq:GW_complete}) is not enumerative.
The Aspinwall--Morrison
formula says that if $X$ is a Calabi-Yau threefold and $d=k_1k_2$, then the
contribution to (\ref{eq:GW_complete}) from smooth rational curves
of degree $k_1$ is $k_2^{-3}$, see \cite{aspinwall1993topological,voisin1996mathematical}.
Manin \cite{manin95} computed such a contribution by proving that:
\begin{equation}\label{eq:manin}
    \int_{\overline{M}_{0,0}(\mathbb{P}^{1},d)} \ctop(R^1\delta_{*}\ev_1^*(\mathcal{O}_{\mathbb P^1}(-1))^{\oplus 2})=\frac{1}{d^3}. 
\end{equation}
\end{rem}

When $X$ is Fano and convex, then it is also homogeneous \cite{Pand}. In
that case, we can apply \cite[2 Theorem]{Kle} to make the
intersection in (\ref{eq:GW_complete}) be transverse, so (\ref{eq:GW_complete})
is enumerative.

Let $n$ be odd. It is well-known that $\mathbb{P}^{n}$ has a unique contact structure\footnote{A contact structure on a smooth complex variety $X$ of dimension $2n+1$ is a line subbundle $L \hookrightarrow\Omega_{X}$ such that for every local section $s$ of $L$, the section $s\wedge(\mathrm{d}s)^{\wedge n}$ is nowhere zero.} given by the line subbundle $\mathcal{O}_{\mathbb{P}^{n}}(-2) \hookrightarrow\Omega_{\mathbb{P}^{n}}$, see \cite{kob}. The curves pointwise tangent to the contact distribution are particularly interesting in the classification of contact structures (see, e.g., \cite{ye}). Such curves are called contact.

\begin{defn}
A rational curve $C\subset\mathbb{P}^{n}$
of degree $d$ is contact if $s_{|TC}\equiv0$ for every local section $s$ of $\mathcal{O}_{\mathbb{P}^{n}}(-2)$.
\end{defn}

Let $\omega_\delta$ be the relative dualizing sheaf of
$\delta\colon\overline{M}_{0,m+1}(\mathbb{P}^{n},d)\rightarrow\overline{M}_{0,m}(\mathbb{P}^{n},d)$. We may define the equivariant vector bundle over $\overline{M}_{0,m}(\mathbb{P}^{n},d)$

$$\mathcal{E}_{d,m}:=\delta_{*}(\omega_{\delta}\otimes\ev_{m+1}^{*}\mathcal{O}_{\mathbb{P}^{n}}(2)).$$

The first author proved in \cite{muratore2020enumeration} that the number of
rational contact curves of degree $d$ meeting general linear
hyperplanes of codimension $a_{i}$ for $i=1,\ldots,m$ is
\begin{equation}\label{eq:contact}
\int_{\overline{M}_{0,m}(\mathbb{P}^{n},d)}\ev_{1}^{*}(h^{a_{1}})\cdots\ev_{m}^{*}(h^{a_{m}})\cdot\ctop(\mathcal{E}_{d,m}).
\end{equation}
We will be interested in rational contact space curves of degree $d$ meeting $a$ general points and $b$ general lines, where $a+2b=2d+1$. 
\begin{rem}
Both (\ref{eq:GW_complete}) and (\ref{eq:contact}) can be reduced to the case when $m=0$, thanks to Remark \ref{rem:semplif}, which holds in these cases as well.
\end{rem}

\section{Localization}\label{sec:3}
In this section we will briefly recall the powerful method of localization, following \cite{kontsevich1995enumeration} and \cite{cox1999mirror}.
Let $T=(\mathbb{C}^{*})^{n+1}$ be the torus acting on $\mathbb{P}^{n}$ with the standard action, and let $\{x_{i}\}_{i=0}^{n}$ be the fixed points.
The action of the torus $T$ on $\mathbb{P}^{n}$ lifts to $\overline{M}_{0,m}(\mathbb{P}^{n},d)$
by composition. That is, if $f\colon C\rightarrow \mathbb{P}^{n}$ is a stable map, for every $t\in T$ we have a new stable map $t\cdot f\colon C\rightarrow \mathbb{P}^{n}$. The fixed locus of this action is usually denoted by $\overline{M}_{0,m}(\mathbb{P}^{n},d)^T$, its connected components are finite and naturally labelled by isomorphism classes of combinatorial data $\Gamma=(g, \mathbf{c}, \mathbf{w}, \mathbf{q})$, called decorated graphs defined as follows.
\begin{enumerate}
\item $g$ is a tree; that is $g$ is a simple undirected, connected graph without cycles. We denote by $V_\Gamma$ and $E_\Gamma$ the sets of vertices and edges of $g$.
\item The vertices of $g$ are colored by the points $\{x_{i}\}_{i=0}^{n}$; the coloring is given by a map $\mathbf c\colon V_\Gamma \rightarrow \{x_0,\ldots,x_n\}$ such that $\mathbf{c}(v)\neq\mathbf{c}(w)$ if $v$ and $w$ are two vertices in the same edge.
\item Each edge of $g$ is weighted by a map $\mathbf w\colon E_\Gamma\rightarrow \mathbb Z$, such that for every $e\in E_\Gamma$, $\mathbf w(e)>0$ and $\sum_{e\in E_\Gamma} \mathbf w(e)=d$.
\item The marks of the vertices of $g$ are given by a map $\mathbf{q}\colon A\rightarrow V_\Gamma$, where $A=\{1,\ldots, m\}$ if $m>0$, and $A=\emptyset$ if $m=0$.
\end{enumerate}
In other words, the number of all connected components of $\overline{M}_{0,m}(\mathbb{P}^{n},d)^T$ equals the number of possible tuples $\Gamma=(g,\mathbf{c},\mathbf{w},\mathbf{q})$ counted modulo isomorphism. An isomorphism between two such tuples is an isomorphism of graphs preserving $\mathbf c, \mathbf w$ and $\mathbf{q}$.

\begin{example}
Consider the space $\overline{M}_{0,0}(\mathbb{P}^{2},1)$. Since the degree is $1$, all possible graphs must have one edge by item $(3)$ and the weight of this edge will be $1$. We can use three colors, namely $x_0,x_1,x_2$, and so we have just three possible coloring (modulo isomorphism). Item $(4)$ is an empty condition since $m=0$. Hence, we have three connected components labelled by the following three graphs.
\end{example}
{
\centering
\begin {tikzpicture}[auto ,node distance =1.5 cm,on grid , thick , state/.style ={ text=black }] 
\node[state] (A1){$x_0$};  \node[state] (B1) [right =of A1] {$x_1$}; \path (A1) edge node{$1$} (B1);

\node[state] (A2)[right =of B1]{$x_0$};  \node[state] (B2) [right =of A2] {$x_2$}; \path (A2) edge node{$1$} (B2);
\node[state] (A3) [right =of B2] {$x_1$}; \node[state] (B3) [right =of A3] {$x_2$};  \path (A3) edge node{$1$} (B3);
\end{tikzpicture}
\par}
\begin{example}\label{exa:112}
For the space $\overline{M}_{0,2}(\mathbb{P}^{1},1)$, the number of connected components is four, labelled by the following graphs:

{
\centering
\begin {tikzpicture}[auto ,node distance =1.5 cm,on grid , thick , state/.style ={ text=black }] 
\node[state] (A1){$x_0$}; \node[] at (0,0.5) {$\{1,2\}$}; \node[state] (B1) [right =of A1] {$x_1$}; \path (A1) edge node{$1$} (B1);

\node[state] (A2)[right =of B1]{$x_0$}; \node[] at (3,0.5) {$\{1\}$}; \node[state] (B2) [right =of A2] {$x_1$}; \node[] at (4.5,0.5) {$\{2\}$}; \path (A2) edge node{$1$} (B2);


\node[state] (A3) [right =of B2] {$x_0$}; \node[] at (6,0.5) {$\{2\}$}; \node[state] (B3) [right =of A3] {$x_1$}; \node[] at (7.5,0.5) {$\{1\}$}; \path (A3) edge node{$1$} (B3);
\node[state] (A4)[right =of B3]{$x_0$}; \node[] at (4,0.5) {}; \node[state] (B4) [right =of A4] {$x_1$}; \node[] at (10.5,0.5) {$\{1,2\}$}; \path (A4) edge node{$1$} (B4); \end{tikzpicture}
\par}
\noindent The numbers inside the brackets are the marks of the vertices.
\end{example}
\begin{example}
For the space $\overline{M}_{0,0}(\mathbb{P}^{n},2)$, the graphs may have one or two edges by item $(3)$. It can easily be seen that we have the following possibilities.
\end{example}
{
\centering
\begin {tikzpicture}[auto ,node distance =1.5 cm,on grid , thick , state/.style ={ text=black }] 
\node[state] (A){$x_i$};
\node[state] (B) [right =of A] {$x_j$};
\path (A) edge node{$2$} (B);
\node [] at (4.5,0.1) {$0\le i < j\le n$};
\end{tikzpicture}
\par}

\smallskip
\smallskip
\smallskip
\smallskip
\smallskip

{
\centering
\begin {tikzpicture}[auto ,node distance =1.5 cm,on grid , thick , state/.style ={ text=black }] 
\node[state] (A){$x_i$};
\node[state] (B) [right =of A] {$x_j$};
\node[state] (C) [right =of B] {$x_k$};
\path (A) edge node{$1$} (B);
\path (B) edge node{$1$} (C);
\node [ ] at (5.35,0.2) {$0\le i < k\le n$};
\node [below] at (5.7,0) {$0\le j\le n,\,j\neq i,k$};

\end{tikzpicture}
\par}
The correspondence between decorated graphs and connected components of $\overline{M}_{0,m}(\mathbb{P}^{n},d)^T$ can be made explicit in the following way.
For each decorated graph $\Gamma=(g,\mathbf{c},\mathbf{w},\mathbf{q})$, we associate the substack $\overline{\mathcal{M}}_{\Gamma}$ of all marked stable maps $(C,f,p_1,\ldots,p_m)\in \overline{M}_{0,m}(\mathbb{P}^{n},d)$ such that the followings hold.
\begin{enumerate}
    \item There is a bijective correspondence between vertices $v$ of $g$ and connected components $C_v$ of $f^{-1}\{x_0,\ldots,x_n\}$. In particular, $C_v$ is either a point or a contracted union of curves. The color of $v$ is the image of $C_v$.
    \item There is a bijective correspondence between edges $e=(v,w)$ of $g$ and irreducible components of $C$ mapped to the line through $\mathbf c(v)$ and $\mathbf c(w)$, with a map of degree $\mathbf w(e)$.
    \item If $m>0$, the set of marked points $\{p_i\}_{i=1}^m$ is mapped to the set of fixed points $\{x_i\}_{i=0}^n$. We define a map $\mathbf{q}\colon\{1,\ldots,m\}\rightarrow V_\Gamma$ such that $\mathbf{q}(i)=v$ if and only if $p_i\in C_v$.
\end{enumerate}
Conversely, every connected component of the fixed locus is of the form $\overline{\mathcal{M}}_{\Gamma}$ for some decorated graph $\Gamma$.

We denote by $\mathrm{val}(v)$ the number of edges connected to the vertex $v$. Note that $C_v$ is of dimension $1$ if and only if $n(v):=\mathrm{val}(v)+|\mathbf{q}^{-1}(v)|$ satisfies $n(v)\ge 3$. Let us denote 
\begin{equation}
M_\Gamma:=\prod_{v:\dim C_v=1}\overline{M}_{0,n(v)}.\label{eq:M_Gamma}    
\end{equation}
If $|\mathrm{Aut}(\Gamma)|$ is the order of the group of automorphisms
of the decorated graph~$\Gamma$, there exists a group of order
\[
a_{\Gamma}:=|\mathrm{Aut}(\Gamma)|\cdot\prod_{e\in E_{g}}\mathbf w(e)
\]
acting on $M_\Gamma$ with quotient map given by:
$$
\phi\colon M_\Gamma\rightarrow \overline{\mathcal{M}}_{\Gamma}.
$$
For any space $X$ with a $T$-action, let us denote by $H_T^*(X)$ the equivariant cohomology of $X$. Moreover, let us denote by $\lambda_{i}$ the weight of the $T$-action on $\mathcal{O}_{\mathbb{P}^{n}}(-1)_{|x_{i}}$. Hence the equivariant cohomology of a single point is isomorphic to
$$
H_T^*(\mathrm{Spec}(\mathbb C))\cong\mathbb{C}[\lambda_{0},\ldots,\lambda_{n}].
$$
Let us denote by $\mathcal R_T$ the field of fractions of $H_T^*(\mathrm{Spec}(\mathbb C))$. By \cite[Proposition 9.1.2]{cox1999mirror} we know that there is an isomorphism
$$
H_T^*(\overline{M}_{0,m}(\mathbb{P}^{n},d))\otimes \mathcal R_T\cong \bigoplus_\Gamma H_T^*(\overline{\mathcal{M}}_{\Gamma})\otimes \mathcal R_T.
$$
From this isomorphism we deduce the Atiyah-Bott formula
\begin{equation}
\int_{\overline{M}_{0,m}(\mathbb{P}^{n},d)}P=\sum_{\Gamma}\frac{1}{a_{\Gamma}}
\frac{P^{T}(\Gamma)}{\ctopT(N_{\Gamma})(\Gamma)},\label{eq:Bott}
\end{equation}
where
\begin{itemize}
    \item $N_{\Gamma}$ is the normal bundle (as a stack) of $\overline{\mathcal{M}}_{\Gamma}$;
    \item $P$ is a polynomial in Chern classes of equivariant vector bundles of the moduli space $\overline{M}_{0,m}(\mathbb{P}^{n},d)$;
    \item $P^T$ is the equivariant polynomial of $P$, obtained by substitution of the Chern classes appearing in $P$ with the corresponding equivariant Chern classes;
    \item $\ctopT(N_\Gamma)$ is the equivariant top Chern class of $N_\Gamma$, i.e., same as before when $P=\ctop(N_\Gamma)$;
    \item $P^{T}(\Gamma)$ is the restriction to $H_T^*(\overline{\mathcal{M}}_{\Gamma})$ of $P^T$.
\end{itemize}
In other words, by Equation \ref{eq:Bott}, we can compute every integral of $P$ as a sum of elements of $\mathcal R_T$.
\subsection{Explicit formulas}
We use the following notation.
\begin{itemize}
\item For each $v\in V_{\Gamma}$ colored by $x_{i}$, the symbols $\lambda_{v}$ and $\lambda_{x_i}$ denote
$\lambda_{i}$.
\item For each $e\in E_{\Gamma}$, the symbols $d_{e}$ and $(e_{1},e_{2})$ are, respectively,
the weight of $e$ and the pair of colors of its vertices.
\item For each $v\in V_{\Gamma}$, a flag $F$ of $v$ is a pair $(v,e)$ where $e$ is an edge having $v$ as one of its vertices. Moreover if $w$ is the  other vertex of $e$, we denote by $\omega_F:=(\lambda_v-\lambda_{w})/d_{e}$, and by $F_v$ the set of flags of $v$.
\end{itemize}
All localizations of the invariants in Section $2$ can be explicitly computed. 
See \cite{kontsevich1995enumeration,cox1999mirror, muratore2020enumeration} and reference therein for the following equivariant classes (here $b>0>k$):
\begin{align}\nonumber
\ctopT(\delta_{*}\ev_{m+1}^{*}(\mathcal{O}_{\mathbb{P}^{n}}(b)))(\Gamma) & =  \prod_{e\in E_{\Gamma}}\prod_{\alpha=0}^{bd_{e}}\frac{\alpha\lambda_{e_{1}}+(bd_{e}-\alpha)\lambda_{e_{2}}}{d_{e}}\prod_{v\in V_{\Gamma}}(b\lambda_{v})^{1-\mathrm{val}(v)};\\\nonumber
\ctopT(\mathcal{E}_{d,m})(\Gamma) & =  \prod_{e\in E_{\Gamma}}\prod_{\alpha=1}^{2d_{e}-1}\frac{\alpha\lambda_{e_{1}}+(2d_{e}-\alpha)\lambda_{e_{2}}}{d_{e}}\prod_{v\in V_{\Gamma}}(2\lambda_{v})^{\mathrm{val}(v)-1};\\\nonumber
\ctopT(R^1\delta_{*}\ev_{m+1}^{*}(\mathcal{O}_{\mathbb{P}^{n}}(k)))(\Gamma) & = \prod_{e\in E_{\Gamma}}\prod_{\alpha=-1}^{kd_{e}-1}\frac{\alpha\lambda_{e_{1}}+(kd_{e}-\alpha)\lambda_{e_{2}}}{d_{e}}\prod_{v\in V_{\Gamma}}(k\lambda_{v})^{\mathrm{val}(v)-1};\\\label{eq:Incidency}
[\delta_{*}(\ev^*_{m+1}{h}^{r+1})]^{T}(\Gamma) & =  \sum_{e\in E_{\Gamma}}\sum_{t=0}^{r}d_{e}\lambda_{e_{1}}^{t}\lambda_{e_{2}}^{r-t};\\\label{eq:O1}
\ev_{j}^{*}(\mathcal{O}_{\mathbb{P}^{n}}(1)) & =  \lambda_{\mathbf{q}(j)}.
\end{align}

In order to compute the contribution from the Euler class of $N_\Gamma$ and from the $\psi$-classes, we introduce the following notation (see \cite[3.3.4]{kontsevich1995enumeration})
\begin{multline*}
X(\Gamma) := \prod_{e\in E_\Gamma}\left(\frac{(-1)^{d_e}}{(d_e!)^2}\left( \frac{d_e}{\lambda_{e_1}-\lambda_{e_2}}\right)^{2d_e}\prod_{k\neq e_1,e_2}
\prod_{a=0}^{d_e}\frac{d_e}{a\lambda_{e_1}+(d_e-a)\lambda_{e_2}-d_e\lambda_{k}}
\right)\\
\times \prod_{v\in V_\Gamma}\left(\prod_{j\neq \mathbf{c}(v)} \lambda_{v}-\lambda_{j} \right)^{\mathrm{val}(v)-1}.
\end{multline*}

Let $\{a_i\}_{i=1}^m$ be non-negative integers, and let $v$ be a vertex. 
Let $S_v:=\mathbf{q}^{-1}(v)=\{j_1,...,j_k\}$ denote the set of marks mapped to $v$, and let $\overline{S_v}:=\sum_{i\in S_v}a_i$ and $N:=n(v)-3-\overline{S_v}$. If $n(v)\ge3$ and $N\ge0$, we recall the multinomial coefficients
\begin{equation}\label{eq:multinomial}
    \binom{n(v)-3}{N,a_{j_1},...,a_{j_k}} = \frac{(n(v)-3)!}{N!\cdot a_{j_1}!\cdots a_{j_k}!}.
\end{equation}
We may extend the value of equation (\ref{eq:multinomial}) in the following way:
\begin{itemize}
    \item if $\overline{S_v}>n(v)-3\ge0$, then its value is $0$;
    \item if $S_v=\emptyset$ or $\overline{S_v}=0$, then its value is $1$;
    \item if $S_v=\{a_{j_1}\}$ and $n(v)=2$, then its value is $(-1)^{a_{j_1}}$.
\end{itemize}
As we saw in (\ref{eq:M_Gamma}), $M_\Gamma$ is a product of moduli spaces $\overline{M}_{0,n(v)}$ when $n(v)\ge3$. In each space, we have formal classes $\psi_F$ for each $F\in F_v$, and $\psi_i$ for each $i\in S_v$.

\begin{lem}\label{lem:integral}
We have the following integral in formal variables:
\begin{equation*}
\int_{\overline{M}_{0,n(v)}}\frac{\prod_{i\in S_v}\psi_i^{a_i}}{\prod_{F\in F_v}\omega_F-\psi_F} = \binom{n(v)-3}{N,a_{j_1},...,a_{j_k} }
\prod_{F\in F_v}\omega_F^{-1}\left(\sum_{F\in F_v}\omega_F^{-1} \right)^N.
\end{equation*}
\end{lem}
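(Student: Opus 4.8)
The plan is to expand the factors in the denominator as terminating geometric series, reduce the problem to the classical evaluation of top $\psi$-monomials on $\overline{M}_{0,n(v)}$, and then resum by the multinomial theorem. Throughout I treat the main case $n(v)\ge 3$, where $\overline{M}_{0,n(v)}$ is a genuine moduli space of dimension $n(v)-3$ and its marked points are indexed by $F_v\sqcup S_v$, carrying the cotangent classes $\psi_F$ and $\psi_i$ respectively. First I would observe that each $\psi_F$ is nilpotent, being a positive-degree class on a finite-dimensional space, so that
\[
\frac{1}{\omega_F-\psi_F}=\sum_{b_F\ge 0}\frac{\psi_F^{\,b_F}}{\omega_F^{\,b_F+1}}
\]
is a finite sum with coefficients in $\mathcal R_T$. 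Taking the product over $F\in F_v$ and multiplying by $\prod_{i\in S_v}\psi_i^{a_i}$ turns the integrand into a finite $\mathcal R_T$-linear combination of $\psi$-monomials, over which integration distributes.

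Next I would invoke the classical genus-zero intersection formula
\[
\int_{\overline{M}_{0,n}}\psi_1^{e_1}\cdots\psi_n^{e_n}=\binom{n-3}{e_1,\ldots,e_n},
\]
valid when $\sum_i e_i=n-3$ and equal to zero otherwise for dimension reasons; this can be obtained by induction from the string equation together with $\int_{\overline{M}_{0,3}}1=1$. Applying it with the flag $F$ carrying exponent $b_F$ and the mark $i$ carrying exponent $a_i$, only the tuples satisfying $\sum_{F}b_F+\overline{S_v}=n(v)-3$, that is $\sum_F b_F=N$, contribute. The integral then becomes
\[
\frac{(n(v)-3)!}{\prod_{i\in S_v}a_i!}\Bigl(\prod_{F\in F_v}\omega_F^{-1}\Bigr)\sum_{\sum_F b_F=N}\frac{1}{\prod_F b_F!}\prod_F\omega_F^{-b_F}.
\]

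The last step is to recognise the inner sum through the multinomial theorem,
\[
\sum_{\sum_F b_F=N}\frac{N!}{\prod_F b_F!}\prod_F\omega_F^{-b_F}=\Bigl(\sum_{F\in F_v}\omega_F^{-1}\Bigr)^N,
\]
so that the inner sum equals $\frac{1}{N!}\bigl(\sum_{F}\omega_F^{-1}\bigr)^N$. Substituting and using $\binom{n(v)-3}{N,a_{j_1},\ldots,a_{j_k}}=\frac{(n(v)-3)!}{N!\,a_{j_1}!\cdots a_{j_k}!}$ produces exactly the right-hand side of the lemma.

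Finally I would reconcile the boundary conventions. When $\overline{S_v}>n(v)-3\ge 0$, that is $N<0$, no tuple $(b_F)$ satisfies $\sum_F b_F=N\ge 0$, so the integral vanishes, matching the stated value $0$; the cases $S_v=\emptyset$ or $\overline{S_v}=0$ collapse to the pure-flag computation with combinatorial coefficient $1$. The hard part will be the degenerate case $n(v)=2$, which lies outside the range where $\overline{M}_{0,n(v)}$ is an honest moduli space: here the value $(-1)^{a_{j_1}}$ cannot be read off from the integral formula above and must instead be justified directly from the localization contribution of a valence-two vertex, so that the formal integral agrees with the prescribed convention.
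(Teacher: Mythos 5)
Your proof is correct and follows essentially the same route as the paper's: geometric-series expansion of each factor $1/(\omega_F-\psi_F)$, reduction via the classical genus-zero evaluation $\int_{\overline{M}_{0,n}}\psi_1^{e_1}\cdots\psi_n^{e_n}=\binom{n-3}{e_1,\ldots,e_n}$ (which the paper justifies by citing Witten and Kontsevich where you invoke the string equation), and resummation by the multinomial theorem. Your closing discussion of the degenerate cases $n(v)\le 2$ also matches the paper, which likewise treats those values as formal conventions imposed outside the proof rather than consequences of the integral formula.
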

\begin{proof}
First we compute that
\begin{eqnarray*}
\int_{\overline{M}_{0,n(v)}}\frac{\prod_{i\in S_v}\psi_i^{a_i}}{\prod_{F\in F_v}\omega_F-\psi_F} & =&
\int_{\overline{M}_{0,n(v)}}\frac{\prod_{i\in S_v}\psi_i^{a_i}}
{\prod_{F\in F_v}\omega_F\left( 1-\frac{\psi_F}{\omega_F}\right)} \\
& =&\prod_{F\in F_v}\omega_F^{-1}\int_{\overline{M}_{0,n(v)}}\frac{\prod_{i\in S_v}\psi_i^{a_i}}
{\prod_{F\in F_v}\left( 1-\frac{\psi_F}{\omega_F}\right) } \\
& =&\prod_{F\in F_v}\omega_F^{-1} \sum_K
\int_{\overline{M}_{0,n(v)}}
 \prod_{i\in S_v}\prod_{F\in F_v}
 \psi_i^{a_i}\left(\frac{\psi_F}{\omega_F}\right)^{k_F},
\end{eqnarray*}
where the sum runs over all partitions\footnote{A partition of $N$ is an indexed tuple of non-negative integers whose sum is $N$.} $K=\{k_F\}_{F\in F_v}$ of $N$. Note that we used $1/(1-x)=1+x+x^2+...$ and the fact that the integral is zero if the sum of the exponents of all $\psi$-classes is not $n(v)-3$. Finally
\begin{align}\nonumber
\sum_K \left( \prod_{F\in F_v} \frac{1}{\omega_F^{k_F}}\right)
\int_{\overline{M}_{0,n(v)}}
 \prod_{i\in S_v}
 \psi_i^{a_i}\prod_{F\in F_v}\psi_F^{k_F}
 & =
\sum_K \left(\prod_{F\in F_v}\frac{1}{\omega_F^{k_F}} \right)
\frac{(n(v)-3)!}{\prod_{i\in S_v} a_i!\prod_{F} k_F!}\\\nonumber
 & = \frac{(n(v)-3)!}{N!\prod_{i\in S_v} a_i!}
\sum_K N!\prod_{F\in F_v}\frac{1}{k_F!\, \omega_F^{k_F}}\\\nonumber
& = \binom{n(v)-3}{N,a_{j_1},...,a_{j_k}}
\left(\sum_{F\in F_v} \frac{1}{\omega_F} \right)^N.
\end{align}
The value of the integral in the first equation follows from \cite{Witten,Airy}.
\end{proof}
In the case $n(v)\le2$, we may formally extend the integral according to (\ref{eq:multinomial}):
\begin{align*}
\int_{\overline{M}_{0,1}}\frac{1}{\omega_F-\psi_F}& =  \omega_F\\
\int_{\overline{M}_{0,2}}\frac{\psi_i^{a_{j_1}}}{\omega_F-\psi_F} & =  (-\omega_F)^{a_{j_1}}\\  
\int_{\overline{M}_{0,2}}\frac{1}{\prod_{F\in F_v}\omega_F-\psi_F} & =  
\frac{1}{\sum_{F\in F_v}\omega_F}\,\,\,(\mathrm{if} \,\,S_v=\emptyset). 
\end{align*}
The contribution from the Euler class of $N_\Gamma$ and from the $\psi$-classes is
\begin{eqnarray}\label{eq:ctopKont}
\frac{[\psi_1^{a_1}\cdots \psi_m^{a_m}]^T(\Gamma)}{\ctopT(N_{\Gamma})(\Gamma)} & = &X(\Gamma)\prod_{v\in V_\Gamma} \int_{\overline{M}_{0,n(v)}}
\frac{\prod_{i\in S_v}\psi_i^{a_i}}{\prod_{F\in F_v}\omega_F-\psi_F}.
\end{eqnarray}
This is proved in \cite[3.3]{kontsevich1995enumeration} when $a_i=0$ for $i=1,\ldots,m$
(see subsections 3.3.3 and 3.3.5 for the explicit formulas). For the general case, we refer to \cite[3.7]{leepand} or \cite[Section 4]{LiuSheshmani}. We use Lemma \ref{lem:integral} to compute the right-hand side of (\ref{eq:ctopKont}).


In the code, in order to have the $\psi$-classes as an \textit{external} function independent of $\ctopT(N_{\Gamma})(\Gamma)$, we implemented the following two functions:
\begin{eqnarray}\label{eq:Euler_inv}
\mathrm{Euler\_inv} & := & X(\Gamma)\prod_{v\in V_\Gamma} \int_{\overline{M}_{0,n(v)}}\frac{1}{\prod_{F\in F_v}\omega_F-\psi_F} \nonumber \\
 & = & X(\Gamma)\prod_{v\in V_\Gamma}\prod_{F\in F_v}\omega_F^{-1}\left(\sum_{F\in F_v}\omega_F^{-1} \right)^{n(v)-3}\\\label{eq:Psi}
\mathrm{Psi} & := &\prod_{v\in V_\Gamma}
\int_{\overline{M}_{0,n(v)}}\frac{\prod_{i\in S_v}\psi_i^{a_i}}{\prod_{F\in F_v}\omega_F-\psi_F} \left( \int_{\overline{M}_{0,n(v)}}\frac{1}{\prod_{F\in F_v}\omega_F-\psi_F}\right)^{-1}\nonumber \\
 & =& \prod_{v\in V_\Gamma} \binom{n(v)-3}{n(v)-3-\overline{S_v},a_{j_1},...,a_{j_k} }
\left(\sum_{F\in F_v}\omega_F^{-1} \right)^{-\overline{S_v}}.
\end{eqnarray}
\begin{example}
Suppose we want to compute
$$
I = \int_{\overline{M}_{0,0}(\mathbb{P}^{2},1)}\delta_{*}(\ev_1^*(h^{2}))\cdot\delta_{*}(\ev_1^*(h^{2})).
$$

\noindent When $\Gamma$ is

{
\centering
\begin {tikzpicture}[auto ,node distance =1.5 cm,on grid , thick , state/.style ={ text=black }] 
\node[state] (A1){$x_i$};  \node[state] (B1) [right =of A1] {$x_j$}; \path (A1) edge node{$1$} (B1);
\node [] at (4.5,0.1) {$0\le i < j\le 2,$};
\end{tikzpicture}
\par}
\noindent{we know that}

\begin{eqnarray*}
\ensuremath{[\delta_{*}(\ev_{1}^{*}h^{2})]^{T}(\Gamma)} & = & \lambda_{i}+\lambda_{j}\\
\frac{1}{\ctopT(N_{\Gamma})(\Gamma)} & = & \frac{1}{\lambda_{j}-\lambda_{k}}\frac{1}{\lambda_{i}-\lambda_{k}},
\end{eqnarray*}
where $k\in\{0,1,2\}\diagdown\{i,j\}$.
By Example \ref{exa:112}, since $\overline{M}_{0,0}(\mathbb{P}^{2},1)^T$ has three connected components, the application of the Atiyah-Bott formula will express $I$ as the sum of three rational polynomials:

\[
I  =  \frac{\lambda_{0}+\lambda_{1}}{\lambda_{1}-\lambda_{2}}\frac{\lambda_{0}+\lambda_{1}}{\lambda_{0}-\lambda_{2}}+\frac{\lambda_{0}+\lambda_{2}}{\lambda_{2}-\lambda_{1}}\frac{\lambda_{0}+\lambda_{2}}{\lambda_{0}-\lambda_{1}}+\frac{\lambda_{1}+\lambda_{2}}{\lambda_{2}-\lambda_{0}}\frac{\lambda_{1}+\lambda_{2}}{\lambda_{1}-\lambda_{0}}
  =  1.
 \]
\end{example}
\section{The Algorithm}
\begin{algorithm}[htbp]\label{alg:ABF}
  \caption{\textsc{The computation of the Atiyah--Bott Formula}}
    \LinesNumbered
  \SetKw{Set}{Set}
  \KwIn{$\mathrm{P:}$ Equivariant class\\
  \hspace{1.2cm} $n$, $d$, $m$}
  \KwOut{The degree of P in $\overline M_{0,m}(\mathbb P^n,d)$}
  \BlankLine
  \Set{$\mathrm{s:}$ array of $n+1$ random 
        rationals between $0$ and $2^{32}-1$}.\\
	\Set{$\mathrm{ans}=0$}\\
  \For{$\mathrm{g}$ a tree with $2,\ldots,d+1$ vertices}
  {
    \For{$\mathrm{c}$ a coloring of $\mathrm{g}$ with at most $n+1$ colors}
    {
        \Set{$\mbox{aut}=$ the number of automorphisms of $\mathrm{g}$ preserving $\mathrm{c}$} \\
        \For{$\mathrm{q}$ a marking of $\mathrm{g}$ with $m$ marks}
        {
            \For{$\mathrm{w}$ a set of edge weights of $\mathrm{g}$}
            {   
                \Set{$\mathrm{ans} = \mathrm{ans} + \mathrm{P(g,c,w,s,q)*Euler\_inv(g,c,w,s,q)}/(\mbox{aut}*\mbox{prod}(\mathrm{w}))$}
        }
    }
}
}
\KwRet{$\mathrm{ans}$}  
  \end{algorithm}
  
Algorithm~\ref{alg:ABF} describes the computation of the 
equivariant class \verb!P! in the moduli space
$\overline M_{0,m}(\mathbb P^n,d)$. 
The for loop starting at line~3 runs through the isomorphisms classes of tree 
graphs with at least $2$ and at most $d+1$ vertices.
The list of such graphs with up to $14$ vertices was
precomputed using the SageMath computational algebra system and stored in a file as a list of Pr{\"u}fer sequences\footnote{Given a tree $g$ of $k$ vertices and a bijective map $f:V_\Gamma\rightarrow \{1,2,...,k\}$, a Pr{\"u}fer sequence is a sequence of $k-2$ numbers in $\{1,2,...,k\}$, which recovers $g$ and $f$ modulo automorphisms of $g$.}.
Then the for loop starting from line~4 runs through all 
the equivalence classes of colorings of the current value of $g$. These colorings are also precomputed with SageMath and stored in a file as a list of arrays of integers. The variable $aut$ in line~5 
will hold the size of the automorphisms group of the colored graph $g$ with 
respect to the coloring $c$. Then the loops starting from lines 6 and 7 run through all possible markings and edge weights without considering equivalence. The function $prod(\mathrm{w})$ denotes the product of all weights.

Our implementation of an equivariant class is a Julia function, returning a rational number, with arguments including \verb!g,c,w,s,q!, such that:
\begin{itemize}
    \item \verb!g! is a graph implemented using the Julia package Graphs~\cite{Graphs2021}. It is generated from a Pr{\"u}fer sequence, which gives an enumeration of the vertices of \verb!g!.
    \item \verb!c! is an array of integers of length equals to the number of vertices of \verb!g!, the $i^\mathrm{th}$ entry of \verb!c! is the color of the $i^\mathrm{th}$ vertex.
    \item \verb!w! is an array of integers of length equals to the number of edges of \verb!g!, whose sum is $d$.
    \item \verb!s! is an array of random rational numbers of length $n+1$, generated at the beginning of the computation.
    \item \verb!q! is an array of integers of length $m$, the $i^\mathrm{th}$ entry of \verb!q! is the vertex containing the $i^\mathrm{th}$ mark.
\end{itemize}

Not all functions need the argument \verb!q!, and some functions need additional arguments. The symbol $\lambda_v$ is implemented as \verb!s[c[v]]!. 
Here \verb!v! is the number of a vertex, so \verb!c[v]! is the color of that vertex, and finally \verb!s[c[v]]! is a rational number. On the other hand, $\lambda_{\mathbf{q}(j)}$ is implemented in the obvious way. For example, the equivariant class (\ref{eq:O1}) is
\begin{verbatim}
function O1_i(g,c,w,s,q,j) 
#O1_i is the name of the function
#j is the number of the mark
    return s[c[q[j]]];
end
\end{verbatim}
This is the simplest equivariant class that we can define. All other equivariant classes can be defined using the operations of the rational numbers. Those operations are well defined in the entries of the array \verb!s!. 

For each edge we associate an entry of \verb!w! using the structure \textit{Dictionary} provided by the Julia language. The two vertices of an edge $e$ are denoted, in Julia, by \verb!src(e)! and \verb!dst(e)!.

The equivariant class (\ref{eq:Incidency}) is the following (\verb!exp! is the exponent of h).
\begin{verbatim}
function Incidency(g,c,w,s,exp)
    p1 = fmpq(0); #the final result
    #it is initialised as the rational number 0/1
    
    r = exp-1;                          
    d = Dict(edges(g).=> w); #assign weights to edges
    #the weight of any edge e is d[e]

    for e in edges(g)
        for t in 0:r
            p1 = p1 + d[e]*(s[c[src(e)]]^(t))*(s[c[dst(e)]]^(r-t));
        end
    end

    return p1;
end
\end{verbatim}

We implemented all equivariant classes listed in Section \ref{sec:3} following the same strategy.
This is possible since those classes are written in explicit combinatorial terms. Even if some of them seem complicated, they are relatively easy to implement in Julia.

The function \verb!P!, defined by the user, is any combination of Julia functions implementing equivariant classes, see the next section for some examples. 
The function \verb!Euler_inv(g,c,w,s,q)!, defined in (\ref{eq:Euler_inv}), coincides with the contribution \eqref{eq:ctopKont} if no $\psi$-classes are involved. 
Otherwise, \verb!Psi(g,c,w,s,q,a)!, defined in (\ref{eq:Psi}), corrects this discrepancy (see (\ref{eq:dualc2}) for an example). The argument \verb!a! in the definition of the function \verb!Psi! is an array of integers representing the exponents of the $\psi$-classes. 

After each loop of line 8 the value of
$$
\mathrm{P(g,c,w,s,q)}*\frac{\mathrm{Euler\_inv(g,c,w,s,q)}}{aut*prod(\mathrm{w})},
$$
which depends on \verb!s!, is added to \verb!ans!. The value of \verb!ans! after the completion of all the loops does not depend on \verb!s!.
\begin{rem}
As we said before, we consider $aut$ as the order of the group of automorphisms of \verb!g! preserving \verb!c!, and not those preserving the full decorated graph (that is, including \verb!w! and \verb!q!) as in Eq.~\eqref{eq:Bott}. This because we would have to deal with a much bigger database of decoreted graphs. The computation could have been faster, but with a high impact on user's hard-disk. However this does not change the final result since we run the loops for all possible weights and markings. 
\end{rem}
\begin{rem}
We use the Nemo package \cite{nemo} for rational number arithmetic instead of the much slower Julia's built-in \verb!Rational{BigInt}!. We thank Jieao Song for suggesting us this improvement.
\end{rem}
\begin{rem}
The current version of the package is 2.0.0. In the future, some parts of the code may be improved. We refer to the internal documentation of the package for an up-to-date explanation of its features.
\end{rem}
\section{Explicit Computations}
An implementation of the computational methods described in this paper is available in the Julia language. 
The implementation can be obtained from the repository {\tt https://github.com/mgemath/AtiyahBott.jl}.
To use our code, the user should first define the equivariant classes to be calculated. For example, the user might type
\begin{verbatim}
P = Hypersurface(5);  
\end{verbatim}
The supported equavariant classes are listed in the file \verb!EquivariantClasses.jl! (see also 
below for some explicit examples). After \verb!P! is defined, one has to call the Atiyah-Bott formula by the command
\begin{verbatim}
AtiyahBottFormula(n,d,m,P);
\end{verbatim}
This function will calculate \verb!P! in the moduli space $\overline{M}_{0,m}(\mathbb{P}^{n},d)$, as explained in the previous section.
For multiple computations in the same moduli space, the variable \verb!P! can be an array of equivariant classes. This way, the computation can be done simultaneously for several equivariant classes
and this reduces the running time. 

More detailed documentation of the package can be found in the github repository referred to above. 
Some examples with timing (in seconds) are given in the rest of the section. All computations were run on an Intel$^\circledR$ Xeon$^\circledR$ E-2244G CPU running at 3.80GHz with 64GB of memory and 8 threads, using Version 2.0.0 of the package.

This program can support any polynomial in equivariant classes of vector bundles over $\overline{M}_{0,m}(\mathbb{P}^{n},d)$, as long as the problem can be translated to a graph-theoretic computation, as in the examples below. In case 
the user needs to add more such classes, the authors are available for support.

\subsection{Projective spaces}
\subsubsection{Plane curves}
We know that the invariant $$\int_{\overline{M}_{0,3d-1}(\mathbb{P}^{2},d)} \prod_{i=1}^{3d-1} \ev_i^*(h)^2$$
equals the number of rational plane curves of degree $d$ through $3d-1$ general points. 
The computation of this invariant can be performed with our package as follows: 
\begin{verbatim}
d = 1; #for other values of d, change this line
P = O1()^2;
AtiyahBottFormula(2,d,3*d-1,P);
\end{verbatim}
The results with running times are listed in Table \ref{tab:planecurves}. This particular computation is not optimal since, as one can deduce from Example \ref{exa:112}, the marks make the number of connected components of $\overline{M}_{0,3d-1}(\mathbb{P}^{2},d)^T$ much bigger than $\overline{M}_{0,0}(\mathbb{P}^{2},d)^T$.
Using Remark \ref{rem:semplif} (that is, the case $m=0$), we can drastically reduce the number of decorated graphs, 
speeding up the computation. Whenever possible, we will adopt this simplification in the forthcoming examples.
\begin{table}[htbp]
\caption{Numbers of rational plane curves.}
    \centering
    \begin{tabular}{c|c|c}
    $d$ & value & runtime \\
    \hline
        1 & 1 &  0.00661\mbox{ s}  \\
        2 & 1 &   0.0259\mbox{ s}\\
        3 & 12 &   13.9\mbox{ s}\\
    \end{tabular}
    
    \label{tab:planecurves}
\end{table}

\subsubsection{Contact curves}
By the previous section, we know that the number of rational contact curves in $\mathbb P^3$ meeting $a$ general points and $b=2(d-a)+1$ general lines is
\[
\int_{\overline{M}_{0,0}(\mathbb{P}^{3},d)}\delta_*(\ev_{1}^{*}(h^{3}))^a\cdot\delta_*(\ev_{1}^{*}(h^{2}))^b\cdot\ctop(\mathcal{E}_{d,0}).
\]
This  invariant can be computed as follows:
\begin{verbatim}
d = 1; #for other values of d, change this line
P = [ Incidency(3)^a*Incidency(2)^(2*(d-a)+1)*Contact() 
            for a in 0:d ];
AtiyahBottFormula(3,d,0,P);
\end{verbatim}
In Table \ref{tab:contact} we list the values up to degree $8$. The values of the table for $a=0$ and $d\le 4$ where computed in \cite{levVa,Eden}. All values for $d\le 3$ where computed in \cite{Muratore_2021}.
\begin{table}[htbp]
\caption{Numbers of rational contact space curves.}
    \centering
    \begin{tabular}{c|c|c|c||c|c|c|c}
    $d$ & a & value & runtime& $d$ & a & value & runtime\\\hline
        1 &0  & 2  & 0.0006\mbox{ s}  &5  &0  & 539504640 & 0.4715\mbox{ s}\\
         & 1 & 1  &   &   & 1 &  37318656 &   \\ \cline{1-4}
        2 &0  & 40  & 0.003\mbox{ s}  &   &2  &  2780160 &  \\
         &1  &  8 &   &   & 3 & 224896  & \\
         &2  &  2 &   &   & 4 & 20000  & \\ \cline{1-4}
        3 &0  & 4160  &0.049\mbox{ s}   &  &5  & 2000  & \\\cline{5-8}
         &1  &  512 &   &6   & 0 &  434591612928 & 2.299 \mbox{ s}  \\
         &2  &  72 &   &   & 1 & 24641224704  & \\
         &3  & 12  &   &   & 2 & 1484648448  &  \\\cline{1-4}
        4 &0  & 1089024   & 0.323\mbox{ s}  &   & 3 & 95545344  &  \\
         &1   &  96512 &   &   &4  & 6614016  & \\  
         &2  &  9408 &   &   &  5&  497664  &\\
         &3  &  1024 &   &   &  6& 41472  & \\\cline{5-8}
         &4  &  128 &   &8  &0 & 858416911346565120  & 164.9\mbox{ s} \\ \cline{1-4}
         7 &0 & 518244677713920 & 16.02\mbox{ s} &  & 1 &  35755655304314880&\\
         & 1&   24890475282432& & & 2  & 1557262876803072& \\ 
         & 2 & 1258547527680  &  &&   3& 71102066589696 & \\
         &    3 & 67234406400 & &&    4&  3414199762944& \\
         &    4 &  3812150272 && &  5&  173116293120& \\
         &   5  & 230814720 & & &  6& 9319309312 & \\
         &    6 & 15059072 & & &7& 536870912 & \\
         &     7&  1075648 && &8&  33554432& \\
    \end{tabular}


    \label{tab:contact}
\end{table}
\subsubsection{Tangent curves}
Let $Y$ be a smooth plane curve of degree $z$. The number of lines tangent to $Y$ and passing through a general point is classically known to be $z(z-1)$. This can be proved using Grassmannians (see \cite[Section 3.6.2]{3264}). Let us prove it using the moduli space of stable curves.
Following \cite[Definition 1.1]{gath}, we denote by 
$\overline{M}_{(2,0)}^Y(\mathbb{P}^{2},1)$ the moduli space of stable maps in $\overline{M}_{0,2}(\mathbb{P}^{2},1)$ tangent to $Y$ at the first marked points. If $x\in \mathbb{P}^2$ is a general point, we have

\begin{eqnarray}
z(z-1) & = & \int_{[\overline{M}_{(2,0)}^Y(\mathbb{P}^{2},1)]^{\mathrm{virt}}}\ev_{2}^{*}(x)\label{eq:firstdualc} \\
 & = & \int_{\overline{M}_{0,2}(\mathbb{P}^{2},1)}\ctop{(\mathrm{J}^{1}(\ev_{1}^{*}\mathcal{O}_{\mathbb{P}^{2}}(z)))}\cdot\ev_{2}^{*}(h^{2})\label{eq:dualc}
\end{eqnarray}

Equation \ref{eq:firstdualc} is an application of Kleiman-Bertini \cite{Kle}: it is well-know that $\overline{M}_{(2,0)}^Y(\mathbb{P}^{2},1)$ is of the expected dimension (in particular, its virtual fundamental class equals its usual fundamental class), thus for a general $x$ the subspaces $\overline{M}_{(2,0)}^Y(\mathbb{P}^{2},1)$ and $\ev_{2}^{-1}(x)$ meet transversely in a zero cycle. Equation \ref{eq:dualc} is deduced from \cite[Theorem 2.6]{gath}.

Using \eqref{eq:PsiGW_std} and Remark \ref{rem:jet} with $p=1$:

\begin{equation*}
0\rightarrow\ev_1^*\mathcal{O}_{\mathbb{P}^n}(z)\otimes\mathbb L^{}_1 \rightarrow
\mathrm{J}^{1}(\ev_1^*\mathcal{O}_{\mathbb{P}^n}(z))\rightarrow
\ev_1^*\mathcal{O}_{\mathbb{P}^n}(z)\rightarrow 0,
\end{equation*}
the integral in Equation \eqref{eq:dualc} can be expanded as
\begin{equation}\label{eq:dualc2}
\int_{\overline{M}_{0,2}(\mathbb{P}^{2},1)}
z\ev_1^*(h)\cdot(z\ev_1^*(h)+\psi_1\psi_2^0)\cdot\ev_2^*(h^{2}).
\end{equation}
It can be computed with the following instructions:
\begin{verbatim}
z = 1; #for other values of z, change this line
P = z*O1_i(1)*(z*O1_i(1)+Psi([1,0]))*O1_i(2)^2;
AtiyahBottFormula(2,1,2,P);
\end{verbatim}

The number of flex tangents of a smooth plane curve of degree $z$ is exactly
\begin{equation}\label{eq:flex}
3z(z-2)=\int_{\overline{M}_{0,1}(\mathbb{P}^{2},1)}
\ctop{(\mathrm{J}^2(\ev_1^*\mathcal{O}_{\mathbb{P}^2}(z)))},
\end{equation}
see \cite[Section 11.3]{3264}.
We can compute it either by expanding the Euler class of the jet bundle, or by using the function \verb'Jet' provided by the package:
\begin{verbatim}
z = 3; #for other values of z, change this line
P = Jet(2,z);
AtiyahBottFormula(2,1,1,P);
\end{verbatim}

Other interesting invariants can be deduced using the jet bundle. For example,
\begin{equation}\label{eq:oscu}
\int_{\overline{M}_{0,1}(\mathbb{P}^{n},d)}
\ev_1^*(h^{n-1})\cdot\ctop{(\mathrm{J}^{(n+1)d-2}(\ev_1^*\mathcal{O}_{\mathbb{P}^m}(1)))}
\end{equation}
is an invariant related to the osculating curves of a hypersurface in $\mathbb P^n$, see \cite{Muratore_2021} and references therein. It can be calculated with

\begin{verbatim}
n = 3;d = 1; #for other values of n and d, change this line
P = O1()^(n-1)*Jet((n+1)*d-2,1);
AtiyahBottFormula(n,d,1,P);
\end{verbatim}
\newpage
\subsection{Calabi-Yau Threefolds}
\subsubsection{Quintic}

Let $X\subset\mathbb{P}^{4}$ be a general quintic hypersurface. We may define the Gromov-Witten invariants as
\begin{eqnarray*}
N_{d} & := & \int_{\overline{M}_{0,0}(\mathbb{P}^{4},d)}\ctop(\delta_{*}(\ev_{1}^{*}\mathcal{O}_{\mathbb P^4}(5))),
\end{eqnarray*}
and recursively the instanton numbers $n_{d}$ by the formula
\begin{eqnarray*}
N_{d} & = & \sum_{k|d}n_{\frac{d}{k}}k^{-3}.
\end{eqnarray*}
It was proved, for $d\leq 9$, by \cite{JohnsenKleiman} that $n_{d}$ equals the number
of rational curves in $X$ of degree $d$. All values of $n_{d}$ are
predicted by Mirror Symmetry (see \cite{Candelas}). In Table \ref{tab:quintic} we list the values of $N_{d}$ up to $d=9$. We used the following code.
\begin{verbatim}
d = 1; #for other values of d, change this line
P = Hypersurface(5);
AtiyahBottFormula(4,d,0,P);
\end{verbatim}
Kontsevich~\cite{kontsevich1995enumeration} reported that the computation for 
$N_4$ took 5 minutes on a Sun computer in 1995 -- now we can do it in a fraction of a second. 
\begin{table}[htbp]
    \caption{GW invariants of a quintic threefold.}
    \centering
    \begin{tabular}{c|c|c}
        $d$ &  value & runtime \\ \hline
        1 & 2875 & 0.0004\mbox{ s} \\
        2 & 4876875/8 & 0.00145\mbox{ s}\\
        3 & 8564575000/27& 0.004\mbox{ s}\\
        4 & 15517926796875/64 & 0.028\mbox{ s} \\
        5 &229305888887648 & 0.243\mbox{ s} \\
        6 & 248249742157695375 & 2.01\mbox{ s}\\
        7 & 101216230345800061125625/343 & 33.1\mbox{ s}\\
        8 &  192323666400003538944396875/512 & 377\mbox{ s}\\
        9 & 367299732093982242625847031250/729 & 4239\mbox{ s}
    \end{tabular}
    
    \label{tab:quintic}
\end{table}
In order to get the real number of rational curves, we can apply Manin's formula (\ref{eq:manin}) by removing from $N_d$ the contribution of the covers of the curves of lower degree. Such a contribution can be obtained by: 
\begin{verbatim}
d = 1; #for other values of d, change this line
P = R1(1)^2;
AtiyahBottFormula(1,d,0,P);
\end{verbatim}
These numbers were computed using these methods up to degree $6$ by \cite{Hiep}. Moreover, they have been computed using the original physical methods for large $d$, see \cite[A060041]{oeis} 
and reference therein.
\subsubsection{Other threefolds}
By the adjunction formula, it can easily be seen that there are four other
possibilities of complete intersection Calabi-Yau threefolds other
than the quintic hypersurface in $\mathbb{P}^4$. These are complete intersections of multidegree $(3,3)$
and $(4,2)$ in $\mathbb{P}^{5}$, $(3,2,2)$ in $\mathbb{P}^{6}$
and $(2,2,2,2)$ in $\mathbb{P}^{7}$. We computed some of the Gromov-Witten
numbers of these varieties in Table \ref{tab:CY}. Our results agree with \cite{LibTei}. See also \cite{AlbanoKatz,Berny} and references therein.
\begin{table}[htbp]
\caption{GW invariants of Calabi-Yau threefolds.}
    \centering
\begin{tabular}{c|c|c||c|c|c}
\multicolumn{3}{c||}{$(3,3)$} & \multicolumn{3}{c}{$(4,2)$}\tabularnewline
\hline 
$d$ & value & runtime & $d$ & value & runtime\tabularnewline
\hline 
 1&1053  &  0.0004\mbox{ s}&1  & 1280 &0.0007\mbox{ s} \tabularnewline
 2&  423549/8&0.0023\mbox{ s}  &2  &92448  &0.0025\mbox{ s} \tabularnewline
 3&  6424365& 0.0103\mbox{ s} &3  &422690816/27  &0.0112\mbox{ s} \tabularnewline
 4&  72925120125/64&0.1019\mbox{ s}  &4  &3883914084  &0.8888\mbox{ s} \tabularnewline
\hline 

\multicolumn{3}{c||}{$(3,2,2)$} & \multicolumn{3}{c}{$(2,2,2,2)$}\tabularnewline
\hline 
$d$ & value & runtime & $d$ & value & runtime\tabularnewline
\hline 
 1& 720&  0.0005\mbox{ s}&1  &512  & 0.0009\mbox{ s}\tabularnewline
 2&  22518&0.0037\mbox{ s}  &2  &9792  &0.0056\mbox{ s}\tabularnewline
 3&  4834592/3& 0.0252\mbox{ s} &3  &11239424/27  &0.0455\mbox{ s}\tabularnewline
 4&  672808059/4& 0.3669\mbox{ s}  &4  &25705160  & 0.7849\mbox{ s}\tabularnewline
 
\end{tabular}
    
    \label{tab:CY}
\end{table}

\subsection{Fano varieties}
\subsubsection{Cubic surface}
Let $X\subset\mathbb{P}^{3}$ be a general cubic surface. Complete recursion formulae for
the enumerative numbers of $X$ are known from \cite{DiFrancesco},
where all of them are listed up to degree $6$. It is known that the
number
$$
\int_{\overline{M}_{0,d-1}(\mathbb{P}^{3},d)}\ev_{1}^{*}(h^{2})\cdots\ev_{d-1}^{*}(h^{2})\cdot\ctop(\delta_{*}(\ev_{d}^{*}\mathcal{O}_{\mathbb P^3}(3)))
$$
is enumerative. Moreover, a general line in $\mathbb{P}^{3}$ meets
$X$ at three points, so if we want to compute the number of curves
in $X$ through $d-1$ general points, we need to multiply the above
display by $3^{1-d}$. In Table \ref{tab:cubic} we listed the number
of rational curves of degree $d$ on $X$ meeting $d-1$ general points up to degree $8$. Our results
are all in agreement with existing results.
\begin{table}[htbp]
\caption{Numbers curves in a cubic surface.}
    \centering
    \begin{tabular}{c|c|c||c|c|c}
    $d$ & value & runtime & $d$ & value & runtime \\
    \hline
        1 & 27 & 0.0003\mbox{ s}& 5 &5616  &0.0538\mbox{ s}  \\
        2 & 27 &0.0008 \mbox{ s}& 6 &82944  &0.287\mbox{ s}  \\
        3 & 84 & 0.0019\mbox{ s}& 7 &1608768  &3.9125\mbox{ s}  \\
        4 & 540 & 0.0082\mbox{ s}& 8 & 38928384 &36\mbox{ s}    \\
    \end{tabular}
    
    \label{tab:cubic}
\end{table}
\subsubsection{Quadric fourfold}
Complete recursion formulae for the quadric surfaces and threefolds are in \cite{DiFrancesco,FP}. In Table 
\ref{tab:quadric2} we listed the values of the invariant
$$
\int_{\overline{M}_{0,0}(\mathbb{P}^{5},d)}\delta_{*}(\ev_{1}^{*}(h^{4}))^a\cdot\delta_{*}(\ev_{1}^{*}(h^{3}))^b\cdot\delta_{*}(\ev_{1}^{*}(h^{2}))^c\cdot\ctop(\delta_{*}(\ev_{1}^{*}\mathcal{O}_{\mathbb P^5}(2))),
$$
where $3a+2b+c=4d+1$ for $d=1$ and $d=2$. Both tables required far less than a second of CPU time.
\begin{table}[htbp]
\caption{Numbers of lines and conics in a quadric fourfold.}
    \centering
    \,\,\,\,\,\,\begin{tabular}{c|c||c|c}
        $(a,b,c)$ & value & $(a,b,c)$ & value \\\hline
        $(0,0,5)$ & 20 & $(0,2,1)$ & 8 \\
        $(0,1,3)$ & 12 & $(1,1,0)$ & 4 \\
        $(1,0,2)$ & 4 & \\
    \end{tabular}
    \begin{tabular}{c|c||c|c}\hline
        $(a,b,c)$ & value & $(a,b,c)$ & value   \\\hline
        $(0,0,11)$ & 338878012462176 & $(0,4,3)$ & 2868685444352\\
        $(0,1,9)$ & 101436360044288 & $(0,5,1)$ & 866614157376\\
        $(0,2,7)$ & 30581975310256 & $(1,0,8)$ & 20679736185808\\
        $(0,3,5)$ & 9330647718128 & $(1,1,6)$ & 6274105265472\\

        $(1,2,4)$ & 1958615970176 & $(2,1,3)$ & 422160042016  \\
        $(1,3,2)$ & 617652337904 & $(2,2,1)$ & 139428589504  \\
        $(1,4,0)$ & 185149566080 & $(3,0,2)$ & 103721819712  \\
        $(2,0,5)$ & 1260340192768 & $(3,1,0)$ & 35140354848  \\

    \end{tabular}
    
    \label{tab:quadric2}
\end{table}
\newpage


\newcommand{\etalchar}[1]{$^{#1}$}
\providecommand{\bysame}{\leavevmode\hbox to3em{\hrulefill}\thinspace}
\providecommand{\MR}{\relax\ifhmode\unskip\space\fi MR }
\providecommand{\MRhref}[2]{%
  \href{http://www.ams.org/mathscinet-getitem?mr=#1}{#2}
}
\providecommand{\href}[2]{#2}

\end{document}